\documentclass[12p]{amsart}
\usepackage{amssymb}
\usepackage{amsmath}
\usepackage{amsfonts}
\usepackage{geometry}
\usepackage{graphicx}
\usepackage{mathrsfs,amssymb}

\usepackage{hyperref}
\usepackage{cleveref}



\theoremstyle{plain}

\newtheorem{definition}{Definition}

\newtheorem{lemma}{Lemma}

\newtheorem{remark}{Remark}

\newtheorem{theorem}{Theorem}
\numberwithin{equation}{section}

\begin{document}
\title[]{Global well-posedness for the cubic nonlinear Schr{\"o}dinger equation with initial lying in $L^{p}$-based Sobolev spaces}

\author{Benjamin Dodson}
\author{Avraham Soffer}
\author{Thomas Spencer}

\begin{abstract}
In this paper we continue our study \cite{dodson2020nonlinear} of the nonlinear Schr\"odinger equation (NLS) with bounded initial data  which do not vanish at infinity. Local well-posedness on $\mathbb{R}$ was proved for real analytic data. Here we prove global well-posedness for the 1D NLS with initial data lying in $L^{p}$ for any $2 < p < \infty$, provided the initial data is sufficiently smooth. We do not use the complete integrability of the cubic nonlinear Schr{\"o}dinger equation. 

\bigskip(\it Dedicated to Jean Bourgain with admiration for his fundamental contributions to analysis.)
\end{abstract}
\maketitle

\section{Introduction}
In this note we continue the study  \cite{dodson2020nonlinear} of the nonlinear Schr{\"o}dinger equation (NLS) on the continuum,
\begin{equation}\label{1.1}
i u_{t} + u_{xx} = |u|^{2} u, \qquad u(0,x) = u_{0}(x), \qquad u : \mathbb{R} \times \mathbb{R} \rightarrow \mathbb{C}.
\end{equation}
Our analysis does not depend on the complete integrability of $(\ref{1.1})$. A solution to $(\ref{1.1})$ has a scaling symmetry. If $u(t,x)$ is a solution to $(\ref{1.1})$, then for any $\lambda > 0$,
\begin{equation}\label{1.3}
\lambda u(\lambda^{2} t, \lambda x),
\end{equation}
is a solution to $(\ref{1.1})$ with initial data $\lambda u_{0}(\lambda x)$. Direct computation of $(\ref{1.3})$ implies that $(\ref{1.1})$ is $\dot{H}^{-1/2}$-critical, since for any $s \in \mathbb{R}$,
\begin{equation}\label{1.4}
\| \lambda u_{0}(\lambda x) \|_{\dot{H}^{s}(\mathbb{R})} = \lambda^{s + \frac{1}{2}} \| u_{0} \|_{\dot{H}^{s}(\mathbb{R})},
\end{equation}
so when $s = -\frac{1}{2}$, the norm of the initial data is invariant under the scaling.

Equation $(\ref{1.4})$ also implies that $(\ref{1.1})$ is $L^{2}$-subcritical, and is also $L^{p}$-subcritical for any $p > 2$. Using by now standard arguments, see for example \cite{cazenave2003semilinear}, \cite{dodson2019defocusing}, \cite{tao2006nonlinear}, $(\ref{1.1})$ is locally well-posed for initial data lying in $L^{2}$. Combining $L^{2}$ subcriticality of $(\ref{1.1})$ with conservation of mass,
\begin{equation}\label{1.2}
M(u(t)) = \int |u(t,x)|^{2} dx = \int |u(0,x)|^{2} = M(u(0)),
\end{equation}
gives global well-posedness of $(\ref{1.1})$ with initial data in $L^{2}$.

For $u_{0} \in L^{p}(\mathbb{R})$, $p > 2$, H{\"o}lder's inequality also implies that $u_{0} \in L^{2}$ on any compact subset of $\mathbb{R}$. Therefore, the obstacle to well-posedness for $u_{0} \in L^{p}$, $2 < p < \infty$, is that data which is initially spread out can move together. Finite propagation speed prevents this from happening for the nonlinear wave equation, see \cite{dodson2020nonlinear}. However, for the nonlinear Schr{\"o}dinger equation, the velocity is controlled by the frequency, and the nonlinearity may move the solution to higher frequencies. In \cite{dodson2020nonlinear}, we studied $(\ref{1.1})$ on a lattice, or for a regularized nonlinearity on the continuum, which prevented the nonlinearity from moving the solution up to high frequencies. Global well-posedness was proved using a local energy argument. Local well-posedness was also established for $(\ref{1.1})$ with bounded real analytic data.

In this paper we prove global well-posedness for $(\ref{1.1})$ with initial data $u_0$ lying in a sufficiently regular $L^{p}$-based Sobolev space, but which may have infinite $L^{2}$ norm and infinite energy $(\ref{1.7})$.
\begin{theorem}\label{t1.1}
For any $n \in \mathbb{Z}$, $n \geq 0$, $(\ref{1.1})$ is globally well-posed for initial data $u_0$ satisfying
\begin{equation}\label{1.5}
\| \langle \partial_{x} \rangle^{2n + 2} u_{0} \|_{L_{x}^{4n + 2}(\mathbb{R})} < \infty.
\end{equation}
The norm $(\ref{1.5})$ is defined below.
\end{theorem}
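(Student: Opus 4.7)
The plan, following the local-energy strategy of the authors' earlier work \cite{dodson2020nonlinear}, has three main steps. The central idea is that although $(\ref{1.1})$ has infinite propagation speed, at the regularity level of $(\ref{1.5})$ one can track a local version of the norm that grows in a controlled way in time.

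First, I would establish local well-posedness in the norm $(\ref{1.5})$ by a standard contraction-mapping argument. In one dimension, $W^{2n+2,\,4n+2}(\mathbb{R}) \hookrightarrow C^{2n+1}(\mathbb{R})$, so the cubic nonlinearity $|u|^{2} u$ is locally Lipschitz in this space; the failure of $e^{it\partial_x^2}$ to be bounded on $L^{4n+2}$ is handled by replacing global Strichartz estimates with local-in-space estimates. The local time of existence $T_{0}$ depends only on the initial norm $\|\langle\partial_x\rangle^{2n+2}u_0\|_{L^{4n+2}}$.

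Second, to preclude finite-time blow-up, I would run a local energy argument. Choose a partition of unity $\{\chi_{j}\}_{j \in \mathbb{Z}}$ adapted to the intervals $[j-1,j+1]$ and track the localized quantities
\[
 E_{j}(t) \;:=\; \int \chi_{j}(x)\,\bigl|\langle\partial_{x}\rangle^{2n+2} u(t,x)\bigr|^{4n+2}\,dx.
\]
Differentiating $E_j$ in $t$ and substituting from $(\ref{1.1})$, the dispersive contribution yields commutator terms with $\chi_{j}$, supported on $\mathrm{supp}\,\chi_{j}'$, which after integration by parts are controlled by the neighbors $E_{j-1}$ and $E_{j+1}$; the nonlinear contribution is controlled by $\|u\|_{L^{\infty}}^{2}E_{j}$, with $\|u\|_{L^{\infty}} \lesssim (\sup_{j} E_{j})^{1/(4n+2)}$ by Sobolev embedding. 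Combining these, one obtains a coupled system of differential inequalities for $\{E_{j}(t)\}$ whose supremum stays finite on any bounded time interval. Iterating the short-time existence from Step~1, via the standard continuation/blow-up alternative, then yields global well-posedness.

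The main obstacle is executing Step~2 rigorously: because $(\ref{1.1})$ has no genuine finite propagation, one cannot simply restrict the equation to the support of $\chi_j$. The resolution is that the commutator $[\partial_{x}^{2n+4},\chi_{j}]$ acting on $u$ can be rewritten, via the Leibniz rule, in terms of derivatives of $u$ of order at most $2n+3$, so that locally in $L^{4n+2}$ these commutators are controlled by the $E_{j\pm1}$. The specific choice $p = 4n+2$, $s = 2n+2$ appears to be calibrated so that the commutator bookkeeping and the Sobolev embedding for the cubic term can be closed simultaneously; the fact that every commutator produces at most one power of $\|u\|_{L^\infty}$ is what prevents the power $1 + 2/(4n+2)$ in the nonlinear estimate from causing blow-up of $\sup_j E_j$ in the relevant range of times.
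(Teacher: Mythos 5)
Your proposal diverges from the paper at both steps, and each step has a genuine gap. In Step 1, a direct contraction mapping in $W^{2n+2,4n+2}$ does not close: the group $e^{it\partial_{xx}}$ is not bounded on $L^{p}$ for $p \neq 2$, and the only available substitute (the paper's Lemma \ref{l2.1.1}, estimate \eqref{2.2.1}) costs two derivatives, so estimating $\| \langle \partial_{x} \rangle^{2n+2} (\text{Duhamel term}) \|_{L^{4n+2}}$ requires $2n+4$ derivatives of the nonlinearity; every application of Duhamel loses two derivatives and the fixed point never closes. The phrase ``local-in-space estimates'' does not repair this. The paper's way out is structural: it spends the available regularity on \emph{finitely many} explicit Picard iterates $u^{0},\dots,u^{n-1}$, each of which lands in a strictly better Lebesgue space ($u^{j}$ is controlled in $L^{(4n+2)/(2j+1)}$, cf. \eqref{2.36}), until the remainder $v$ lies in $L^{2}$-based Strichartz spaces, where the group is unitary and no derivatives are lost. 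That is precisely what calibrates the exponents $2n+2$ and $4n+2$.

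In Step 2 the local-energy bookkeeping does not close either. Setting $w=\langle\partial_{x}\rangle^{2n+2}u$, the time derivative of $E_{j}=\int \chi_{j}|w|^{4n+2}$ produces, from the dispersive term $iw_{t}+w_{xx}$ after integration by parts, flux and commutator terms containing $\partial_{x}w$, i.e.\ derivatives of $u$ of order $2n+3$, against the weights $\chi_{j}'$ and $|w|^{4n}$; these are \emph{not} controlled by $E_{j\pm1}$, which only see derivatives up to order $2n+2$, so your claim that the order-$(2n+3)$ commutator terms are controlled by the neighbors is false. This one-derivative loss is exactly the infinite-propagation-speed/frequency-transport obstruction described in the introduction: the local-energy argument of \cite{dodson2020nonlinear} worked only on the lattice or with a regularized nonlinearity, where the solution cannot be pushed to high frequencies. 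A further warning sign: your scheme uses neither the defocusing sign nor any spatial decay, so if it closed it would yield global existence for bounded non-decaying data such as $\cos x+\cos(\sqrt{2}x)$, which the paper explicitly states is open. The paper's actual globalization is different in kind: after rescaling the data small, it shows the remainder $v$ has finite mass and (defocusing) energy, proves the Gronwall-type bound on the modified quantity $M(v)+E(v)+f(t)$ in Lemma \ref{l3.2} --- where the only term of full energy strength is \eqref{3.25} --- and concludes via the $L^{2}$ blow-up criterion \eqref{3.4}--\eqref{3.8} for $v$.
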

\begin{definition}\label{d1.2}
For any $p \in [1, \infty]$ and any positive integer $n$, define the norm
\begin{equation}\label{1.8}
\| \langle \partial_{x} \rangle^{n} f \|_{L^{p}(\mathbb{R})} = \sum_{j = 0}^{n} \| \partial_{x}^{j} f \|_{L^{p}(\mathbb{R})}.
\end{equation}
\end{definition}

\begin{remark} For example, Theorem 1 implies that $u_0= [cos(x) + cos(\sqrt{2} x)] (1+|x|^2)^{-\alpha}$ is globally well posed for any  $\alpha > 0$. When $\alpha = 0$, local existence  was proved in \cite{dodson2020nonlinear}, \cite{MR3328142} however, global existence is not known. 
\end{remark}

To explain the method of proof, we first note that it is done by successively increasing the $p$ norm and regularity of the initial data. First observe that when $n = 0$, we can take $u_{0} \in L^{2}$ only, and we do not need $\| \langle \partial_{x} \rangle^{2} u_{0} \|_{L^{2}} < \infty$.\medskip

When $n = 1$, not that the choice for the Sobolev space is $L^6$ and with enough regularity. Then, though the conserved quantities of the equation are infinite, we note that $u_0^3 \in L^2.$ So, the zeroth order iteration of the equivalent integral equation has its Duhamel term in $L^2.$ The exploitation of the fact that the Duhamel term may live in a better space goes back at least to the ideas of \cite{bourgain1998refinements}.

For larger $n$, one proves that under the linear flow, the $L^p$ norms remain bounded, if the data is sufficiently regular. The solution grows with time in $L^p$, but only polynomially; which allows us to prove that the Picard iterations are such that the Duhamel term is in $L^2,$ and  in the local in time Strichartz norm. We can obtain a local solution by making the ansatz
$$u(t) = u^{0}(t) + u^{1}(t) + ... + u^{n - 1}(t) + v(t), \quad \text{where}  \quad u^0(t)= e^{it\partial_{xx}}u_0,$$
where $u^{i}(t)$ represents the $i$-th Picard iterate, and $v$ is the remainder. It is convenient to first rescale so that the initial data $(\ref{1.5})$ is small. Then, by Picard iteration and stationary phase arguments, we prove local well-posedness of $(\ref{1.1})$ on $[-1,1]$.

The next step is to observe that the equation is sub-critical in these $L^p$ spaces, and therefore it is possible to go from local result to global.
The fact that one can control the Duhamel, nonlinear part of the solution in $L^2$, is a key fact, that also allows us to use the conservation laws for the nonlinear terms. Indeed, $(\ref{1.1})$ with initial data $v(1) \in H^{1}$ has a solution on $[1, \infty)$. We then prove that $(\ref{1.1})$ with initial data $u(1)$ has a solution on $[1, \infty)$ by proving global well-posedness of $(\ref{1.1})$ with initial data $v(1)$ and treating $u(1) - v(1)$ as a perturbation. The analysis uses conservation of the mass, $(\ref{1.2})$, and the energy
\begin{equation}\label{1.7}
E(u(t)) = \frac{1}{2} \int |u_{x}(t,x)|^{2} dx + \frac{1}{4} \int |u(t,x)|^{4} dx = \frac{1}{2} \int |u_{x}(0,x)|^{2} dx + \frac{1}{4} \int |u(0,x)|^{4} dx = E(u(0)).
\end{equation}
One can then control higher $L^p$ norms by the previous case.\medskip

It should be pointed out that the decay at infinity of the initial data is crucial for the analysis. So the case $p=\infty$ is left open. That is an indication that even though there is focusing that can produce large derivative and size locally, the decay of the solution at infinity allows for some dispersion.

\begin{remark}
Theorem $\ref{t1.1}$ is probably not sharp for any $n > 0$.
\end{remark}

\begin{remark}
Local well-posedness would hold equally well in the focusing case. While conservation of mass, $(\ref{1.2})$, would guarantee global well-posedness for both the focusing and defocusing problems in the case that $u_{0}$ has finite mass, the fact that our proof of global well-posedness relies on conservation of energy means that the global result only holds in the defocusing case.

The local arguments would also work for
\begin{equation}\label{1.8}
i u_{t} + u_{xx} = |u|^{2r} u,
\end{equation}
for some integer $r > 1$. However, when $r > 1$, one cannot directly use the analog of $(\ref{3.25})$ since the power of $E(v)$ will be larger than one in that case.
\end{remark}

\section{Local result}
We begin by proving a local version of Theorem $\ref{t1.1}$.

\begin{theorem}\label{t2.1}
For any $n \in \mathbb{Z}$, $n \geq 1$, there exists $\epsilon(n) > 0$ such that if
\begin{equation}\label{2.1}
\| \langle \partial_{x} \rangle^{2n + 1} u_{0} \|_{L^{4n + 2}(\mathbb{R})} \leq \epsilon(n),
\end{equation}
then $(\ref{1.1})$ has a local solution in $L_{t,x}^{4n + 2}([-1, 1] \times \mathbb{R})$ on $[-1, 1]$.
\end{theorem}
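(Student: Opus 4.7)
The plan is to follow the ansatz described in the introduction: write
$$u(t) = u^{0}(t) + u^{1}(t) + \cdots + u^{n-1}(t) + v(t),\qquad u^{0}(t) := e^{it\partial_{xx}} u_{0},$$
where $u^{i}$ for $1 \le i \le n-1$ is the degree-$(2i+1)$ piece of the formal Duhamel expansion around $u^{0}$, and $v$ is a remainder with $v(0)=0$ for which a standard Strichartz contraction can be closed.

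First, one establishes bounds on the linear part and on the iterates. A Miyachi-type multiplier estimate shows that $e^{it\partial_{xx}}$ maps the $L^{p}$-based Bessel potential space $H^{s+\sigma_{p},p}$ into $H^{s,p}$ uniformly for $|t|\le 1$, where $\sigma_{p}=2|\tfrac{1}{2}-\tfrac{1}{p}|$; for $p=4n+2$ one has $\sigma_{p}<1$, which is why the hypothesis demands only $2n+1$ derivatives and yields
$$\| \langle \partial_{x}\rangle^{2n} u^{0} \|_{L^{\infty}_{t}([-1,1];L^{4n+2}_{x})} \lesssim \epsilon(n).$$
Define recursively
$$u^{i}(t) = -i \int_{0}^{t} e^{i(t-s)\partial_{xx}} \Bigl( \sum_{i_{1}+i_{2}+i_{3}=i-1} u^{i_{1}} \overline{u^{i_{2}}} u^{i_{3}} \Bigr)(s)\, ds,$$
so that $u^{i}$ has total $u_{0}$-degree $2i+1$. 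By H\"older, a triple product of $u_{0}$-degrees summing to $2i+1$ lies in $L^{(4n+2)/(2i+1)}_{x}$, and induction on $i$, combined with the Miyachi bound, gives
$$\| \langle \partial_{x}\rangle^{2n+1-c_{i}} u^{i} \|_{L^{\infty}_{t}([-1,1]; L^{(4n+2)/(2i+1)}_{x})} \lesssim \epsilon(n)^{2i+1}$$
for a small cumulative derivative loss $c_{i}\le i$, valid for $0\le i \le n-1$. Sobolev embedding then also places each $u^{i}$ in $L^{\infty}_{t,x}$, hence in $L^{4n+2}_{t,x}([-1,1]\times\mathbb{R})$; and any further triple product of total degree $\ge 2n+1$ sits in $L^{\infty}_{t}L^{2}_{x}$ with norm $\lesssim \epsilon(n)^{2n+1}$.

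Next, substituting the ansatz into (\ref{1.1}), the remainder satisfies
$$iv_{t}+v_{xx} = \mathcal{N}(u^{0},\dots,u^{n-1},v),\qquad v(0)=0,$$
where by construction every term in $\mathcal{N}$ containing no $v$ has total $u_{0}$-degree at least $2n+1$, hence lies in $L^{\infty}_{t}L^{2}_{x}$ with norm $\lesssim \epsilon(n)^{2n+1}$. Terms involving $v$ are absorbed using the Strichartz space $X := L^{\infty}_{t}L^{2}_{x}\cap L^{6}_{t,x}$ on $[-1,1]$, together with the $L^{\infty}_{t,x}$-smallness of each $u^{i}$; the standard one-dimensional inhomogeneous Strichartz estimate then closes a contraction in a small ball of $X$. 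Differentiating the $v$-equation and using that $2n+1$ derivatives are available on $u_{0}$ propagates $v\in L^{\infty}_{t}H^{1}_{x}$, whence the one-dimensional Sobolev embedding $H^{1}(\mathbb{R})\hookrightarrow L^{\infty}(\mathbb{R})$, followed by interpolation with $v\in L^{\infty}_{t}L^{2}_{x}$, yields $v\in L^{4n+2}_{t,x}([-1,1]\times\mathbb{R})$; adding the contributions of $u^{0},\dots,u^{n-1}$ gives the theorem.

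The principal technical difficulty is the bookkeeping around the iterates: one must verify that the Miyachi loss plus the derivatives consumed at each H\"older pairing never exceeds the $2n+1$ derivatives allotted, and that each triple product fits exactly into the next $L^{p}$-scale as $i$ increases from $0$ to $n-1$. Once these uniform iterate bounds are in place, the Strichartz contraction for $v$ is a standard small-data argument.
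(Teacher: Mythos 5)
Your proposal is correct and follows essentially the same route as the paper: the ansatz $u=u^{0}+\dots+u^{n-1}+v$, the iterate bounds in the cascading spaces $L_{x}^{(4n+2)/(2j+1)}$ with smallness $\epsilon^{2j+1}$, the observation that the leftover source terms have total degree at least $2n+1$ and hence land in $L^{2}_{x}$ with norm $\lesssim \epsilon^{2n+1}$, and the closing Strichartz contraction for $v$ are exactly the paper's (\ref{2.33})--(\ref{2.44}). The only differences are minor: you cite the sharp Miyachi fixed-time $L^{p}$ bound (loss $\sigma_{p}<1$) where the paper proves the cruder but self-contained Lemma \ref{l2.1.1} (loss $2$, which still fits the $(2n+1)$-derivative budget), you organize the iterates by homogeneous degree rather than as differences of successive Duhamel maps as in (\ref{2.33}), and you add an explicit persistence-of-regularity step to place $v$ (hence $u$) in $L^{4n+2}_{t,x}$, a point the paper leaves implicit.
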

\begin{proof}
The case when $n = 0$ is already well-known, so start with $n = 1$.
\begin{theorem}\label{t2.2}
There exists $\epsilon > 0$ such that if 
\begin{equation}\label{2.2}
\| \langle \partial_{x} \rangle^{2} u_{0} \|_{L^{6}(\mathbb{R})} < \epsilon,
\end{equation}
then $(\ref{1.1})$ has a local solution in $L_{t,x}^{6}([-1, 1] \times \mathbb{R})$ for $\epsilon > 0$ sufficiently small.
\end{theorem}
\begin{proof}
We begin by proving an estimate on the operator $e^{it \partial_{xx}}$,
\begin{lemma}\label{l2.1.1}
For any $2 \leq p \leq \infty$,
\begin{equation}\label{2.2.1}
\| e^{it \partial_{xx}} u_{0} \|_{L^{p}} \lesssim (1 + t^{3/2}) (\| \partial_{xx} u_{0} \|_{L^{p}} + \| \partial_{x} u_{0} \|_{L^{p}} + \| u_{0} \|_{L^{p}}).
\end{equation}
\end{lemma}
\begin{proof}
Lemma $\ref{l2.1.1}$ is proved by computing the stationary phase kernel,
\begin{equation}\label{2.3}
e^{it \partial_{xx}} u_{0}(x) = \frac{1}{C t^{1/2}} \int e^{-i \frac{(x - y)^{2}}{4t}} u_{0}(y) dy.
\end{equation}
Let $\chi$ be a smooth, compactly supported function, $\chi(y) = 1$ for $|y| \leq 1$, and $\chi$ is supported on $|y| \leq 2$. Integrating by parts,
\begin{equation}\label{2.4}
\aligned
\frac{1}{C t^{1/2}} \int e^{-i \frac{(x - y)^{2}}{4t}} (1 - \chi(x - y)) u_{0}(y) dy \\ = \frac{1}{C t^{1/2}} \int \frac{2it}{x - y} \frac{d}{dy} (e^{-i \frac{(x - y)^{2}}{4t}}) (1 - \chi(x - y)) u_{0}(y) dy \\
=  C t^{1/2} \int \frac{d}{dy} (\frac{1}{x - y} (1 - \chi(x - y))) \cdot e^{-i \frac{(x - y)^{2}}{4t}} u_{0}(y) dy \\ + C t^{1/2} \int e^{-i \frac{(x - y)^{2}}{4t}} \frac{1}{x - y} (1 - \chi(x - y)) u_{0}'(y) dy.
\endaligned
\end{equation}
Since $\frac{d}{dy} (\frac{1}{x - y} (1 - \chi(x - y))) \in L^{1}(\mathbb{R})$, Young's inequality implies that for any $1 \leq p \leq \infty$,
\begin{equation}\label{2.5}
\| C t^{1/2} \int \frac{d}{dy} (\frac{1}{x - y} (1 - \chi(x - y))) \cdot e^{-i \frac{(x - y)^{2}}{4t}} u_{0}(y) dy \|_{L^{p}} \lesssim t^{1/2} \| u_{0} \|_{L^{p}}.
\end{equation}
Making another integration by parts argument shows that the second term on the right hand side of $(\ref{2.4})$ also has bounded $L^{p}$ norm,
\begin{equation}\label{2.6}
\| C t^{1/2} \int e^{-i \frac{(x - y)^{2}}{4t}} \frac{1}{x - y} (1 - \chi(x - y)) u_{0}'(y) dy \|_{L^{p}} \lesssim t^{3/2} \| \partial_{x} u_{0} \|_{L^{p}} + t^{3/2} \| \partial_{xx} u_{0} \|_{L^{p}}.
\end{equation}

Now then, by the fundamental theorem of calculus,
\begin{equation}\label{2.7}
\aligned
\chi(x - y) u_{0}(y) = \chi(x - y) u_{0}(x) + \chi(x - y) (u_{0}(y) - u_{0}(x)) \\ = \chi(x - y) u_{0}(x) + \chi(x - y) \int_{x}^{y} u_{0}'(s) ds.
\endaligned
\end{equation}
Since $\chi(y)$ is smooth and compactly supported, $\| \chi(y) u_{0}(x) \|_{H^{1}} \lesssim |u_{0}(x)|$, and therefore by the Sobolev embedding theorem and the fact that $e^{it \Delta}$ is a unitary operator for $L^{2}$-based Sobolev spaces,
\begin{equation}\label{2.8}
\| e^{it \partial_{xx}} (\chi(y) u_{0}(x)) \|_{L^{\infty}} \lesssim |u_{0}(x)|.
\end{equation}
In particular, this implies
\begin{equation}\label{2.9}
|e^{it \partial_{xx}}(\chi(y) u_{0}(x))|(t,x) \lesssim |u_{0}(x)|.
\end{equation}

Finally, as in $(\ref{2.4})$,
\begin{equation}\label{2.10}
\aligned
\frac{1}{C t^{1/2}} \int e^{-i \frac{(x - y)^{2}}{4t}} \chi(x - y) (u_{0}(y) - u_{0}(x)) dy \\ = \frac{1}{C t^{1/2}} \int \frac{2it}{x - y} \frac{d}{dy} (e^{-i \frac{(x - y)^{2}}{4t}}) \chi(x - y) (u_{0}(y) - u_{0}(x)) dy \\
= C t^{1/2} \int \frac{d}{dy} (e^{-i \frac{(x - y)^{2}}{4t}}) \chi(y) \int_{0}^{1} u_{0}'(sy) ds dy
\endaligned
\end{equation}
Integrating by parts in $y$ then implies
\begin{equation}\label{2.11}
\| C t^{1/2} \int \frac{d}{dy} (e^{-i \frac{(x - y)^{2}}{4t}}) \chi(y) \int_{0}^{1} u_{0}'(sy) ds dy \|_{L^{\infty}} \lesssim t^{1/2} \| \partial_{x} u_{0} \|_{L^{\infty}} + t^{1/2} \| \partial_{xx} u_{0} \|_{L^{\infty}}.
\end{equation}
Interpolating with the well known unitary group bound $\| e^{it \partial_{xx}} u_{0} \|_{L^{2}} = \| u_{0} \|_{L^{2}}$ proves that for any $2 \leq p \leq \infty$,
\begin{equation}\label{2.12}
\| e^{it \partial_{xx}} u_{0} \|_{L^{p}} \lesssim (1 + t^{3/2}) (\| \partial_{xx} u_{0} \|_{L^{p}} + \| \partial_{x} u_{0} \|_{L^{p}} + \| u_{0} \|_{L^{p}}).
\end{equation}
\end{proof}

Theorem $\ref{t2.2}$ then follows directly from $(\ref{2.2.1})$ by Picard iteration. Define a set
\begin{equation}\label{2.13}
X = \{ v : \| v \|_{L_{t,x}^{6}([0, 1] \times \mathbb{R})} \lesssim \epsilon^{3} \},
\end{equation}
and define a sequence $v_{n}$ recursively, where $v_{0} = 0$ and
\begin{equation}\label{2.14}
v_{n + 1} = -i \int_{0}^{t} e^{i(t - \tau) \partial_{xx}} |e^{i \tau \partial_{xx}} u_{0} + v_{n}|^{2} (e^{i \tau \partial_{xx}} u_{0} + v_{n}) d\tau.
\end{equation}

Recall the Strichartz estimates. See \cite{strichartz1977restrictions}, \cite{bourgain1999global}, and \cite{tao2006nonlinear} for more information.
\begin{theorem}\label{t2.3}
Let $(p_{1}, q_{1})$ and $(p_{2}, q_{2})$ be admissible pairs in one dimension, such that
\begin{equation}\label{2.15}
\frac{2}{p_{i}} = \frac{1}{2} - \frac{1}{q_{i}}, \qquad 4 \leq p_{i} \leq \infty, \qquad i = 1, 2.
\end{equation}
If
\begin{equation}\label{2.16}
u(t,x)= u^0(t,x)-i\int_0^t e^{i(t-\tau)\partial_{xx}} F(\tau,x)d\tau, \qquad u^0(t,x) = e^{it\partial_{xx}}u_{0}, \qquad u : I \times \mathbb{R} \rightarrow \mathbb{C},
\end{equation}
$I$ is an interval containing $0$, then
\begin{equation}\label{2.17}
\| u \|_{L_{t}^{p_{1}} L_{x}^{q_{1}}(I \times \mathbb{R})} \lesssim \| u(0) \|_{L^{2}} + \| F \|_{L_{t}^{p_{2}'} L_{x}^{q_{2}'}(I \times \mathbb{R})}.
\end{equation}
\begin{remark}
$p'$ is the Lebesgue dual of $p$, $\frac{1}{p'} + \frac{1}{p} = 1$.
\end{remark}
\end{theorem}

Plugging the Strichartz estimates into $(\ref{2.14})$, with $p_1=q_1=6 \,\, \text{and}\,\, p_2'= 1, q_2'=2$ we have
\begin{equation}\label{2.18}
\| v_{n + 1} \|_{L_{t,x}^{6}([-1, 1] \times \mathbb{R})} \lesssim \| e^{i t \partial_{xx}} u_{0} \|_{L_{t,x}^{6}([-1,1] \times \mathbb{R})}^{3} + \| v_{n} \|_{L_{t,x}^{6}([-1,1] \times \mathbb{R})}^{3} \lesssim \epsilon^{3} + \| v_{n} \|_{L_{t,x}^{6}([-1,1] \times \mathbb{R})}^{3}.
\end{equation}
Also by Strichartz estimates,
\begin{equation}\label{2.19}
\| v_{n + 1} - v_{n} \|_{L_{t,x}^{6}} \lesssim \epsilon^{2} \| v_{n} - v_{n - 1} \|_{L_{t,x}^{6}} + (\| v_{n} \|_{L_{t,x}^{6}}^{2} + \| v_{n - 1} \|_{L_{t,x}^{6}}^{2}) \| v_{n} - v_{n - 1} \|_{L_{t,x}^{6}}.
\end{equation}
Then by the contraction mapping principle, this proves that there is a unique $v \in L_{t,x}^{6}$ such that
\begin{equation}\label{2.20}
v = -i \int_{0}^{t} e^{i(t - \tau) \partial_{xx}} |e^{i \tau \partial_{xx}} u_{0} + v|^{2} (e^{i \tau \partial_{xx}} u_{0} + v) d\tau.
\end{equation}
This proves Theorem $\ref{t2.2}$.
\end{proof}

Next, consider the case when $n = 2$.
\begin{theorem}\label{t2.4}
There exists $\epsilon > 0$ such that if
\begin{equation}\label{2.21}
\| \langle \partial_{x} \rangle^{5} u_{0} \|_{L^{10}} < \epsilon,
\end{equation}
then $(\ref{1.1})$ has a local solution on $[-1, 1]$.
\end{theorem}
\begin{proof}
The solution $u(t)$ is of the form
\begin{equation}\label{2.21.1}
u(t) = u^{0}(t) + u^{1}(t) + v(t),
\end{equation}
where
\begin{equation}\label{2.22}
u_{l}^{0}(t) = e^{it \partial_{xx}} u_{0},
\end{equation}
and $u^{1}(t)$ is the next Picard iterate
\begin{equation}\label{2.23}
u^{1}(t) = \int_{0}^{t} e^{i(t - \tau) \partial_{xx}} |u^{0}(\tau)|^{2} u^{0}(\tau) d\tau.
\end{equation}
By Lemma 1, for $-1 \leq t \leq 1$,
\begin{equation}\label{2.24}
\| \langle \partial_{x} \rangle^{3} e^{it \partial_{xx}} u_{0} \|_{L_{x}^{10}} \lesssim \epsilon,
\end{equation}
and using the product rule, for $-1 \leq t \leq 1$,
\begin{equation}\label{2.25}
\aligned
\| \langle \partial_{x} \rangle u^{1}(t) \|_{L_{x}^{10/3}} \lesssim \| \langle \partial_{x} \rangle \int_{0}^{t} e^{i(t - \tau) \partial_{xx}} |u^{0}(\tau)|^{2} u^{0}(\tau) d\tau \|_{L^{10/3}} \\ \lesssim \| \langle \partial_{x} \rangle^{3} |u^{0}|^{2} u^{0} \|_{L_{t}^{1} L_{x}^{10/3}} \lesssim \| \langle \partial_{x} \rangle^{3} u^{0} \|_{L_{t}^{\infty} L_{x}^{10}}^{3} \lesssim \epsilon^{3}.
\endaligned
\end{equation}
\begin{remark}
Observe that by the Sobolev embedding theorem, $u^{0}, u^{1} \in L_{t,x}^{\infty}$.
\end{remark}
Then, as in Theorem $\ref{t2.3}$, obtain $v(t)$ that solves
\begin{equation}\label{2.26}
v(t) = \int_{0}^{t} e^{i(t - \tau) \partial_{xx}} |u|^{2} u d\tau - u^{1}(t),
\end{equation}
where $u$ satisfies $(\ref{2.21.1})$. We substitute $(\ref{2.21.1})$ into $\ref{2.26}$. Then  since it is not too important to distinguish between $u$ and $\bar{u}$,
\begin{equation}\label{2.27}
\aligned
\int_{0}^{t} e^{i(t - \tau) \partial_{xx}} |u|^{2} u d\tau = \int_{0}^{t} e^{i(t - \tau) \partial_{xx}} v^{3} d\tau + 3 \int_{0}^{t} e^{i(t - \tau) \partial_{xx}} v^{2} (u^{0} + u^{1}) d\tau \\
+ 3 \int_{0}^{t} e^{i(t - \tau) \partial_{xx}} v (u^{0} + u^{1})^{2} d\tau + \int_{0}^{t} e^{i(t - \tau) \partial_{xx}} (u^{0} + u^{1})^{3} d\tau.
\endaligned
\end{equation}
Then by the Sobolev embedding theorem, $(\ref{2.24})$, and $(\ref{2.25})$,
\begin{equation}\label{2.28}
\| u^{0} \|_{L_{t,x}^{\infty}} + \| u^{1} \|_{L_{t,x}^{\infty}} \lesssim \epsilon + \epsilon^{3}.
\end{equation}
Let $S^{0}$ be the Strichartz space, $S^{0}([-1,1] \times \mathbb{R}) = L_{t}^{\infty} L_{x}^{2}([-1,1] \times \mathbb{R}) \cap L_{t}^{4} L_{x}^{\infty}([-1,1] \times \mathbb{R})$. By Theorem $\ref{t2.3}$, we can bound the first three terms on the right hand side of $(\ref{2.27})$ by
\begin{equation}\label{2.29}
\aligned
\| \int_{0}^{t} e^{i(t - \tau) \partial_{xx}} v^{3} d\tau \|_{S^{0}([-1,1] \times \mathbb{R})} + \| \int_{0}^{t} e^{i(t - \tau) \partial_{xx}} v^{2} (u^{0} + u^{1}) d\tau \|_{S^{0}([-1,1] \times \mathbb{R})} \\ + \| \int_{0}^{t} e^{i(t - \tau) \partial_{xx}} v (u^{0} + u^{1})^{2} d\tau \|_{S^{0}([-1,1] \times \mathbb{R})} \lesssim \| v \|_{S^{0}([-1,1] \times \mathbb{R})}^{3} + \epsilon^{2} \| v \|_{S^{0}([-1,1] \times \mathbb{R})}.
\endaligned
\end{equation}
Next, the last term in the right hand side of $(\ref{2.27})$ is bounded by
\begin{equation}\label{2.30}
\int_{0}^{t} e^{i(t - \tau) \partial_{xx}} |u^{0} + u^{1}|^{2} (u^{0} + u^{1}) d\tau - u^{1}(t) = \int_{0}^{t} e^{i(t - \tau) \partial_{xx}} [|u^{0} + u^{1}|^{2} (u^{0} + u^{1}) - |u^{0}|^{2} u^{0}] d\tau.
\end{equation}
Therefore,
\begin{equation}\label{2.31}
\aligned
\| \int_{0}^{t} e^{i(t - \tau) \partial_{xx}} |u^{0} + u^{1}|^{2} (u^{0} + u^{1}) d\tau - u^{1}(t) \|_{S^{0}([-1,1] \times \mathbb{R})} \\ \lesssim \| u^{0} \|_{L_{t}^{\infty} L_{x}^{10}}^{2} \| u^{1} \|_{L_{t}^{\infty} L_{x}^{10/3}} + \| u^{0} \|_{L_{t}^{\infty} L_{x}^{10}} \| u^{1} \|_{L_{t}^{\infty} L_{x}^{5}}^{2} + \| u^{1} \|_{L_{t}^{\infty} L_{x}^{6}}^{3} \lesssim \epsilon^{5}.
\endaligned
\end{equation}
Therefore,
\begin{equation}\label{2.32}
\| v \|_{S^{0}([-1,1] \times \mathbb{R})} \lesssim \| v \|_{S^{0}([-1,1] \times \mathbb{R})}^{3} + \epsilon^{5},
\end{equation}
which implies that $\| v \|_{S^{0}([-1,1] \times \mathbb{R})} \lesssim \epsilon^{5}$. As in the proof of Theorem $\ref{t2.3}$, we can prove Theorem $\ref{t2.4}$ by a contraction mapping argument.
\end{proof}

Now to prove Theorem $\ref{t2.1}$ for a general $n$. Define the sequence of functions,
\begin{equation}\label{2.33}
\aligned
u^{0}(t) &= e^{it \partial_{xx}} u_{0}, \\
u^{1}(t) &= \int_{0}^{t} e^{i(t - \tau) \partial_{xx}} |u^{0}|^{2} u^{0}(\tau) d\tau, \\
u^{j}(t) &= \int_{0}^{t} e^{i(t - \tau) \partial_{xx}} |\sum_{k = 0}^{j - 1} u^{k}|^{2} (\sum_{k = 0}^{j - 1} u^{k}) d\tau - u^{j - 1}(t), \qquad \text{for any} \qquad 2 \leq j \leq n - 1.
\endaligned
\end{equation}
Again by $(\ref{2.4})$--$(\ref{2.11})$, for $-1 \leq t \leq 1$,
\begin{equation}\label{2.34}
\| \langle \partial_{x} \rangle^{2n - 1} e^{it \partial_{xx}} u_{0} \|_{L^{4n + 2}} \lesssim \epsilon,
\end{equation}
\begin{equation}\label{2.35}
\| \langle \partial_{x} \rangle^{2n - 3} u^{1}(t) \|_{L_{x}^{\frac{4n + 2}{3}}} \lesssim \epsilon^{3},
\end{equation}
and arguing by induction, for any $0 \leq j \leq n - 1$,
\begin{equation}\label{2.36}
\| \langle \partial_{x} \rangle^{2(n - 1 - j) + 1} u^{j}(t) \|_{L_{x}^{\frac{4n + 2}{2j + 1}}} \lesssim \epsilon^{2j + 1}.
\end{equation}
\begin{remark}
The implicit constants depend on $n$.
\end{remark}

Then let
\begin{equation}\label{2.37}
v(t) = \int_{0}^{t} e^{i(t - \tau) \partial_{xx}} |u|^{2} u d\tau - \sum_{j = 1}^{n - 1} u^{j}(t).
\end{equation}
Following $(\ref{2.27})$,
\begin{equation}\label{2.38}
\aligned
v(t) = \int_{0}^{t} e^{i(t - \tau) \partial_{xx}} |u|^{2} u d\tau - \sum_{j = 1}^{n - 1} u^{j}(t) = \int_{0}^{t} e^{i(t - \tau) \partial_{xx}} |v|^{2} v d\tau \\
+ 3 \int_{0}^{t} e^{i(t - \tau) \partial_{xx}} v^{2} (\sum_{j = 0}^{n - 1} u^{j}) d\tau + 3 \int_{0}^{t} e^{i(t - \tau) \partial_{xx}} v (\sum_{j = 0}^{n - 1} u^{j})^{2} d\tau \\
+ \int_{0}^{t} e^{i(t - \tau) \partial_{xx}} |\sum_{j = 0}^{n - 1} u^{j}|^{2} (\sum_{j = 0}^{n - 1} u^{j}) d\tau - \sum_{j = 1}^{n - 1} u^{j}(t).
\endaligned
\end{equation}
Again, by Strichartz estimates,
\begin{equation}\label{2.39}
\| \int_{0}^{t} e^{i(t - \tau) \partial_{xx}} |v|^{2} v d\tau \|_{S^{0}([-1,1] \times \mathbb{R})} \lesssim \| v \|_{S^{0}([-1,1] \times \mathbb{R})}^{3}.
\end{equation}
By the Sobolev embedding theorem and $(\ref{2.36})$,
\begin{equation}\label{2.40}
\| \int_{0}^{t} e^{i(t - \tau) \partial_{xx}} v^{2} (\sum_{j = 0}^{n - 1} u^{j}) d\tau \|_{S^{0}([-1,1] \times \mathbb{R})} \lesssim \epsilon \| v \|_{S^{0}([-1,1] \times \mathbb{R})}^{2},
\end{equation}
and
\begin{equation}\label{2.41}
\| \int_{0}^{t} e^{i(t - \tau) \partial_{xx}} v (\sum_{j = 0}^{n - 1} u^{j})^{2} d\tau \|_{S^{0}([-1,1] \times \mathbb{R})} \lesssim \epsilon^{2} \| v \|_{S^{0}([-1,1] \times \mathbb{R})}.
\end{equation}
Finally, compute
\begin{equation}\label{2.42}
\aligned
\int_{0}^{t} e^{i(t - \tau) \partial_{xx}} |\sum_{j = 0}^{n - 1} u^{j}|^{2} (\sum_{j = 0}^{n - 1} u^{j}) d\tau - \sum_{j = 1}^{n - 1} u^{j}(t) \\
= \int_{0}^{t} e^{i(t - \tau) \partial_{xx}} [|\sum_{j = 0}^{n - 1} u^{j}|^{2} (\sum_{j = 0}^{n - 1} u^{j}) - |\sum_{j = 0}^{n - 2} u^{j}|^{2} (\sum_{j = 0}^{n - 2} u^{j})] d\tau.
\endaligned
\end{equation}
Therefore,
\begin{equation}\label{2.43}
\aligned
\| \int_{0}^{t} e^{i(t - \tau) \partial_{xx}} [|\sum_{j = 0}^{n - 1} u^{j}|^{2} (\sum_{j = 0}^{n - 1} u^{j}) - |\sum_{j = 0}^{n - 2} u^{j}|^{2} (\sum_{j = 0}^{n - 2} u^{j})] d\tau \|_{S^{0}([-1,1] \times \mathbb{R})} \\
\lesssim \| u^{n - 1} \|_{L_{t}^{\infty} L_{x}^{\frac{4n + 2}{2n - 1}}} (\sum_{j = 0}^{n - 1} \| u^{j} \|_{L_{t}^{\infty} L_{x}^{4n + 2}})^{2} \lesssim \epsilon^{2n + 1}.
\endaligned
\end{equation}
Therefore,
\begin{equation}\label{2.44}
\| v \|_{S^{0}([-1,1] \times \mathbb{R})} \lesssim \epsilon^{2n + 1} + \| v \|_{S^{0}([-1,1] \times \mathbb{R})}^{3},
\end{equation}
which proves Theorem $\ref{t2.1}$ for a general $n$.
\end{proof}

\section{A global result}
The local results in the previous section may be extended to global results for a slightly smaller subset of initial data. First, consider the case when $n = 1$.
\begin{theorem}\label{t3.1}
	Equation $(\ref{1.1})$ is globally well-posed for
	\begin{equation}\label{9.1}
	\| \langle \partial_{x} \rangle^{4} u_{0} \|_{L^{6}} < \infty.
	\end{equation}
\end{theorem}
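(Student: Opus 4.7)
The plan is to combine Theorem \ref{t2.2} with the standard defocusing $H^{1}$ global theory via a perturbative ansatz. Using the scaling symmetry $(\ref{1.3})$, replacing $u_{0}$ by $\lambda u_{0}(\lambda \cdot)$ sends $\|\partial_{x}^{k} u_{0}\|_{L^{6}}$ to $\lambda^{k + 5/6} \|\partial_{x}^{k} u_{0}\|_{L^{6}}$, so for $\lambda$ small we may assume $\|\langle \partial_{x} \rangle^{4} u_{0}\|_{L^{6}} \leq \epsilon$ for any desired small $\epsilon > 0$; in particular the hypothesis of Theorem \ref{t2.2} is met. Applying that theorem on $[-1, 1]$ produces $u = u^{0} + v$ with $u^{0}(t) = e^{it \partial_{xx}} u_{0}$ and $v = -i \int_{0}^{t} e^{i(t-\tau) \partial_{xx}} |u|^{2} u \, d\tau$, with $\|u\|_{L^{6}_{t,x}([-1,1])} \lesssim \epsilon$.

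Next, I upgrade $v$ to $L^{\infty}_{t} H^{1}_{x}$. Theorem \ref{t2.3} and H\"{o}lder give
\[
\|v\|_{L^{\infty}_{t} L^{2}_{x}([-1,1])} \lesssim \||u|^{2} u\|_{L^{1}_{t} L^{2}_{x}} \lesssim \|u\|_{L^{6}_{t,x}}^{3} \lesssim \epsilon^{3},
\]
while Lemma \ref{l2.1.1} applied to $\partial_{x} u_{0}$ yields $\|\partial_{x} u^{0}\|_{L^{6}_{t,x}([-1,1])} \lesssim \|\langle \partial_{x} \rangle^{3} u_{0}\|_{L^{6}} \lesssim \epsilon$. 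The analogous Strichartz estimate for $\partial_{x} v$, combined with $\|\partial_{x}(|u|^{2} u)\|_{L^{1}_{t} L^{2}_{x}} \lesssim \|u\|_{L^{6}_{t,x}}^{2} \|\partial_{x} u\|_{L^{6}_{t,x}}$ and absorption, gives $\|\partial_{x} v\|_{L^{\infty}_{t} L^{2}_{x}} \lesssim \epsilon^{3}$. Hence $v(1) \in H^{1}(\mathbb{R})$ with small norm, and both the mass $(\ref{1.2})$ and energy $(\ref{1.7})$ of $v(1)$ are finite.

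The standard defocusing $H^{1}$ theory on $\mathbb{R}$, using conservation of $(\ref{1.2})$ and $(\ref{1.7})$, produces a global solution $w$ of $(\ref{1.1})$ on $[1, \infty)$ with $w(1) = v(1)$ and $\|w\|_{L^{\infty}_{t} H^{1}_{x}} \lesssim 1$; the 1D Sobolev embedding gives $\|w\|_{L^{\infty}_{t,x}} \lesssim 1$. I seek $u$ on $[1, \infty)$ in the form $u(t) = w(t) + u^{0}(t) + r(t)$ with $r(1) = 0$, so that
\[
i r_{t} + r_{xx} = |w + u^{0} + r|^{2} (w + u^{0} + r) - |w|^{2} w.
\]
The right-hand side expands into (i) pure source terms in $w$ and $u^{0}$, (ii) terms linear in $r$ with coefficients from $w$ and $u^{0}$, and (iii) quadratic and cubic terms in $r$. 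Lemma \ref{l2.1.1} (with the 1D embedding $W^{1,6} \hookrightarrow L^{\infty}$ for the endpoint) gives $\|u^{0}(t)\|_{L^{p}_{x}} \lesssim (1 + |t|^{3/2}) \epsilon$ for $p \in [6, \infty]$; combined with the uniform bounds on $w$, H\"{o}lder bounds the $L^{1}_{t} L^{2}_{x}([T, T+1])$-norms of (i) and (ii) by $C(1 + T)^{A}$ and $C(1 + T)^{A} \|r\|_{L^{\infty}_{t} L^{2}_{x}}$ respectively, for some fixed $A > 0$, while (iii) is handled by the standard cubic Strichartz bound $\lesssim \|r\|_{S^{0}}^{3}$. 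Iterating local Strichartz well-posedness on subintervals of $[T, T+1]$ whose length is chosen in terms of $T$ and the current size of $\|r\|_{L^{2}}$ gives $\|r\|_{S^{0}([1, T])} < \infty$ for every finite $T$, ruling out finite-time blowup and yielding $u$ on $[1, \infty)$; the analogous argument via the time-reversal symmetry of $(\ref{1.1})$ handles $(-\infty, -1]$.

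The main obstacle is the polynomial-in-$T$ growth of $\|u^{0}(t)\|_{L^{p}}$ coming from Lemma \ref{l2.1.1}: it forces the source and linear-in-$r$ forcing in the $r$-equation to grow with $T$, so no uniform $[1, \infty)$ a priori bound on $\|r(t)\|_{L^{2}}$ is available; however, only finite-time a priori bounds are needed for continuation past each fixed time, so the iteration nonetheless closes.
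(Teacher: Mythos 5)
Your preliminary steps are fine and match the paper's starting point: the rescaling to make $\| \langle \partial_{x} \rangle^{4} u_{0} \|_{L^{6}}$ small, the local solution $u = u^{0} + v$ on $[-1,1]$ from Theorem \ref{t2.2}, and the upgrade of $v$ to $L^{\infty}_{t} H^{1}_{x}$ so that $v(1) \in H^{1}$. The genuine gap is in the final step. After writing $u = w + u^{0} + r$, you never establish any a priori bound on $r$, and ``iterating local Strichartz well-posedness on subintervals whose length is chosen in terms of $T$ and the current size of $\| r \|_{L^{2}}$'' does not rule out finite-time blowup: because of the quadratic and cubic terms in $r$, the admissible step length shrinks as $\| r(t_{j}) \|_{L^{2}}$ grows, while each step can multiply $\| r \|_{L^{2}}$ by a fixed constant (the mass of $r$ is not conserved, being fed by the source and coupling terms), so the step lengths can decay geometrically and their sum can converge before reaching $T$ --- exactly the blowup scenario that iteration alone cannot exclude. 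Nor does a Gronwall bound on $M(r)$ alone close: $\frac{d}{dt} M(r)$ contains terms such as $\mathrm{Im} \int \bar{r} \, r^{2} \, \bar{u}^{0}\,dx$, which are controlled by $\| u^{0} \|_{L^{\infty}} \| r \|_{L^{3}}^{3}$, not by $M(r)$.

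This missing a priori bound is precisely the heart of the paper's proof. There one keeps $u = u^{0} + v$, writes the forced equation (\ref{3.10}) for $v$, and proves Lemma \ref{l3.2}, $\sup_{[-T,T]} M(v) + E(v) \lesssim_{T} 1$, which combined with the continuation criterion (\ref{3.4})--(\ref{3.8}) gives the theorem. The dangerous terms in $\frac{d}{dt} E(v)$, such as $(\partial_{t} v, |u_{l}|^{2} u_{l})$, cannot be estimated directly (two derivatives land on $v$); instead the time derivative is moved onto $u_{l}$, the resulting total-derivative pieces are absorbed into the modified energy $\mathcal{E} = M + E + f$ of (\ref{3.27})--(\ref{3.30}), and $\partial_{t} u_{l} = i \partial_{xx} u_{l}$ is integrated by parts in $x$, leaving only one derivative on $u_{l}$ --- this is exactly where the hypothesis of four derivatives in $L^{6}$ enters, via (\ref{3.19}), giving $\partial_{x} u_{l} \in L^{6} \cap L^{\infty}$; crucially, the worst term $(|v|^{2} v, \partial_{t} u_{l})$ in (\ref{3.25}) is bounded \emph{linearly} by $C(T) E(v)$, so Gronwall closes. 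If you wish to keep your three-term decomposition $u = w + u^{0} + r$, you would still have to run this same modified-energy Gronwall argument for $M(r) + E(r)$ (the additional coefficient $w \in L^{\infty}_{t} H^{1}_{x}$ causes no new difficulty), so the extra splitting buys nothing and does not let you bypass the paper's key lemma.
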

\begin{proof}
	Using the scaling symmetry,
	\begin{equation}\label{3.2}
	u(t,x) \mapsto \lambda u(\lambda^{2} t, \lambda x),
	\end{equation}
	it is possibly to rescale the initial data so that $(\ref{9.1}) \leq \epsilon$. Then by Theorem $\ref{t2.1}$, $(\ref{1.1})$ has a solution on the interval $[-1,1]$ which is of the form
	\begin{equation}\label{3.3}
	u(t) = e^{it \partial_{xx}} u_{0} - i \int_{0}^{t} e^{i(t - \tau) \partial_{xx}} |u(\tau)|^{2} u(\tau) d\tau = u^0(t) + v(t),
	\end{equation}
	where $\| v(t) \|_{L^{2}} \lesssim \epsilon^{3}$ for all $t \in [-1,1]$.\medskip
	
	Furthermore, if $(\ref{1.1})$ has a solution on the maximal interval $[0, T)$, $T < \infty$, then
	\begin{equation}\label{3.4}
	\lim_{t \nearrow T} \| v(t) \|_{L^{2}} = +\infty.
	\end{equation}
	Indeed, suppose there exists $t_{0} < T$ such that
	\begin{equation}\label{3.5}
	\| v(t_{0}) \|_{L^{2}} < \infty.
	\end{equation}
	Then by Strichartz estimates there exists some $\delta(\| v(t_{0}) \|_{L^{2}}) > 0$ such that
	\begin{equation}\label{3.6}
	\| e^{i(t - t_{0}) \partial_{xx}} v(t_{0}) \|_{L_{t}^{3} L_{x}^{6}([t_{0}, t_{0} + \delta] \times \mathbb{R})} \leq \epsilon.
	\end{equation}
	Also, by $(\ref{2.4})$--$(\ref{2.12})$, for $\delta(T) > 0$ sufficiently small,
	\begin{equation}\label{9.7}
	\| e^{it \partial_{xx}} u_{0} \|_{L_{t}^{3} L_{x}^{6}([t_{0}, t_{0} + \delta] \times \mathbb{R})} \leq \epsilon.
	\end{equation}
	Following the proof of Theorem $\ref{t2.2}$, $(\ref{1.1})$ is locally well-posed on the interval $[t_{0}, t_{0} + \delta]$. Since $\delta$ is a function of $\| v(t_{0}) \|_{L^{2}}$ and $T$ only, if there exists a sequence $t_{n} \nearrow T$ for which
	\begin{equation}\label{3.8}
	\lim_{n \rightarrow \infty} \| v(t_{n}) \|_{L^{2}} < \infty,
	\end{equation}
	then the solution of $(\ref{1.1})$ can be continued past $T$.\medskip
	
	Compute the energy and mass of $v$,
	\begin{equation}\label{3.9}
	M(v) + E(v) = \frac{1}{2} \int |v|^{2} + \frac{1}{2} \int |\partial_{x} v|^{2} + \frac{1}{4} \int |v|^{4}.
	\end{equation}
	\begin{lemma}\label{l3.2}
		For any $T$, there exists a bound
		\begin{equation}\label{3.9.1}
		\sup_{t \in [-T, T]} M(v(t)) + E(v(t)) \lesssim_{T} 1.
		\end{equation}
	\end{lemma}
	\begin{proof}
		This lemma will be proved using a Gronwall-type argument. In general, it will be convenient to relabel
\begin{equation}
u_{l} = \sum_{j = 0}^{n - 1} u^{j},
\end{equation}
where $u_{l}$ denotes the linear part. In this case, since $n = 1$, $u_{l} = u^{0}$.	

Observe that $v$ solves the nonlinear Schr{\"o}dinger equation,
		\begin{equation}\label{3.10}
		i \partial_{t} v + \partial_{xx} v = |v|^{2} v + 2|v|^{2} u_{l} + v^{2} \bar{u}_{l} + 2|u_{l}|^{2} v + (u_{l})^{2} \bar{v} + |u_{l}|^{2} u_{l}.
		\end{equation}
		
		Therefore,
		\begin{equation}\label{3.12}
		\frac{d}{dt} M(v) = 3(|v|^{2} v, u_{l}) + (v^{2}, (u_{l})^{2}) + (v, |u_{l}|^{2} u_{l}),
		\end{equation}
		where
		\begin{equation}\label{3.11}
		(f, g) = Re \int f(x) \bar{g}(x) dx.
		\end{equation}
		
		First, by $(\ref{2.12})$,
		\begin{equation}\label{3.13}
		(v, |u_{l}|^{2} u_{l}) \lesssim \| v \|_{L^{2}} \| u_{l} \|_{L^{6}}^{3} \lesssim_{T} M(v)^{1/2}.
		\end{equation}
		Next,
		\begin{equation}\label{3.14}
		(v^{2}, (u_{l})^{2}) \lesssim \| v \|_{L^{4}}^{4/3} \| v \|_{L^{2}}^{2/3} \| u_{l} \|_{L^{6}}^{2} \lesssim_{T} M(v)^{1/3} E(v)^{1/3},
		\end{equation}
		and
		\begin{equation}\label{2.15}
		(|v|^{2} v, u_{l}) \lesssim \| v \|_{L^{2}}^{1/3} \| v \|_{L^{4}}^{8/3} \| u_{l} \|_{L^{6}} \lesssim_{T} M(v)^{1/6} E(v)^{2/3}.
		\end{equation}
		
		Now compute the change of energy.
		\begin{equation}\label{2.16}
		\aligned
		\frac{d}{dt} E(v) &= (\partial_{x} \partial_{t} v, \partial_{x} v) + (\partial_{t} v, |v|^{2} v) \\
		&= (\partial_{t} v, -\partial_{xx} v + |v|^{2} v) \\
		&= (\partial_{t} v, i \partial_{t} v - (|u|^{2} u - |v|^{2} v)) \\ 
		&= (\partial_{t} v, |v|^{2} v - |u|^{2} u) \\
		&= -(\partial_{t} v, |u_{l}|^{2} u_{l}) - 2(\partial_{t} v, |u_{l}|^{2} v) - (\partial_{t} v, u_{l}^{2} \bar{v}) - (\partial_{t} (|v|^{2} v), u_{l}).
		\endaligned
		\end{equation}
		
		By the product rule,
		\begin{equation}\label{3.17}
		-(\partial_{t} v, |u_{l}|^{2} u_{l}) = -\frac{d}{dt} (v, |u_{l}|^{2} u_{l}) + (v, \partial_{t}(|u_{l}|^{2} u_{l})).
		\end{equation}
		Integrating by parts,
		\begin{equation}\label{3.18}
		\aligned
		(v, \partial_{t}(|u_{l}|^{2} u_{l})) = 3(v, (i \partial_{xx} u_{l}) u_{l}^{2}) = -6(v, i(\partial_{x} u_{l})^{2} u_{l}) - 3(\partial_{x} v, i (\partial_{x} u_{l}) u_{l}^{2}) \\
		\lesssim \| \partial_{x} u_{l} \|_{L^{6}}^{2} \| u_{l} \|_{L^{6}} \| v \|_{L^{2}} + \| \partial_{x} v \|_{L^{2}} \| \partial_{x} u_{l} \|_{L^{6}} \| u_{l} \|_{L^{6}}^{2} \lesssim_{T} M(v)^{1/2} + E(v)^{1/2}.
		\endaligned
		\end{equation}
		\begin{remark}
			Since $\| \langle \partial_{x} \rangle^{4} u_{0} \|_{L^{6}} < \infty$, $(\ref{2.4})$--$(\ref{2.12})$ imply
		\end{remark}
		\begin{equation}\label{3.19}
		\| \langle \partial_{x} \rangle^{2} e^{it \partial_{xx}} u_{0} \|_{L^{6}} \lesssim \| \langle \partial_{x} \rangle^{4} u_{0} \|_{L^{6}} \lesssim_{T} 1.
		\end{equation}
		
		Next,
		\begin{equation}\label{3.20}
		-2(\partial_{t} v, |u_{l}|^{2} v) = -(\partial_{t} |v|^{2}, |u_{l}|^{2}) = -\frac{d}{dt} (|v|^{2}, |u_{l}|^{2}) + (|v|^{2}, \partial_{t} |u_{l}|^{2}).
		\end{equation}
		Again, by the product rule, $(\ref{3.19})$, and integrating by parts,
		\begin{equation}\label{3.21}
		\aligned
		(|v|^{2}, \partial_{t} |u_{l}|^{2}) = 2 (|v|^{2}, (i \partial_{xx} u_{l}) \bar{u}_{l}) = -4 ((\partial_{x} v) v, (i \partial_{x} u_{l}) \bar{u}_{l}) \\ \lesssim \| \partial_{x} v \|_{L^{2}} \| v \|_{L^{2}}^{2/3} \| v \|_{L^{4}}^{4/3} \| \partial_{x} u_{l} \|_{L^{6}} \| u_{l} \|_{L^{\infty}} \lesssim_{T} E(v)^{2/3} M(v)^{1/6}.
		\endaligned
		\end{equation}
		By a similar calculation,
		\begin{equation}\label{3.22}
		-(\partial_{t} v, u_{l}^{2} \bar{v}) = -\frac{1}{2} (\partial_{t} (v^{2}), u_{l}^{2}) = -\frac{1}{2} \frac{d}{dt} (v^{2}, u_{l}^{2}) + (v^{2}, u_{l} (\partial_{t} u_{l})).
		\end{equation}
		Integrating by parts,
		\begin{equation}\label{3.23}
		\aligned
		(v^{2}, u_{l}(\partial_{t} u_{l})) = -(v^{2}, u_{l}(i \partial_{xx} u_{l})) = -(v^{2}, (\partial_{x} u_{l})(i \partial_{x} u_{l})) - 2(v (\partial_{x} v), u_{l}(i \partial_{x} u_{l})) \\
		\lesssim \| v \|_{L^{2}}^{2/3} \| v \|_{L^{4}}^{4/3} \| \partial_{x} u_{l} \|_{L^{6}} + \| \partial_{x} v \|_{L^{2}} \| v \|_{L^{4}}^{2/3} \| v \|_{L^{2}}^{1/3} \| \partial_{x} u_{l} \|_{L^{6}} \| u_{l} \|_{L^{\infty}} \\
		\lesssim_{T} E(v)^{1/3} M(v)^{1/3} + E(v)^{2/3} M(v)^{1/6}.
		\endaligned
		\end{equation}
		
		Finally,
		\begin{equation}\label{3.24}
		-(\partial_{t} |v|^{2} v, u_{l}) = -\frac{d}{dt} (|v|^{2} v, u_{l}) + (|v|^{2} v, \partial_{t} u_{l}).
		\end{equation}
		Integrating by parts,
		\begin{equation}\label{3.25}
		\aligned
		(|v|^{2} v, \partial_{t} u_{l}) = (|v|^{2} v, i \partial_{xx} u_{l}) = (-\partial_{x} (|v|^{2} v), i \partial_{x} u_{l}) \\ \lesssim \| \partial_{x} v \|_{L^{2}} \| v \|_{L^{4}}^{2} \| \partial_{x} u_{l} \|_{L^{\infty}} \lesssim_{T} E(v).
		\endaligned
		\end{equation}
		Therefore, we have proved that for all $t \in [0, T)$,
		\begin{equation}\label{3.26}
		\aligned
		\frac{d}{dt} (M(v) + E(v)) \lesssim C(T) (M(v)^{1/2} + E(v)^{1/2} + M(v)^{1/3} E(v)^{1/3} \\ + M(v)^{1/6} E(v)^{2/3} + E(v)) - \frac{d}{dt} f(t),
		\endaligned
		\end{equation}
		where
		\begin{equation}\label{3.27}
		f(t) = (v, |u_{l}|^{2} u_{l}) + (|v|^{2}, |u_{l}|^{2}) + \frac{1}{2} (v^{2}, u_{l}^{2}) + (|v|^{2} v, u_{l}).
		\end{equation}
		
		Now let
		\begin{equation}\label{3.28}
		\mathcal E(t) = M(v)(t) + E(v)(t) + f(t).
		\end{equation}
		By H{\"o}lder's inequality,
		\begin{equation}\label{3.29}
		\aligned
		|f(t)| \lesssim_{T} \| v \|_{L^{2}} + \| v \|_{L^{4}}^{4/3} \| v \|_{L^{2}}^{2/3} + \| v \|_{L^{2}}^{1/3} \| v \|_{L^{4}}^{8/3} \\\lesssim_{T} M(v)^{1/2} + M(v)^{1/3} E(v)^{1/3} + M(v)^{1/6} E(v)^{2/3} \ll M(v)(t) + E(v)(t),
		\endaligned
		\end{equation}
		when $M(v) + E(v)$ is large. Therefore,
		\begin{equation}\label{3.30}
		M(v) + E(v) \lesssim_{T} \mathcal E(t) + 1,
		\end{equation}
		and
		\begin{equation}\label{3.31}
		\frac{d}{dt} \mathcal E(t) = \frac{d}{dt}(M(t) + E(t)) \lesssim_{T} (M + E) \lesssim_{T} \mathcal E(t) + 1,
		\end{equation}
		see above. By Gronwall's inequality the proof is complete.
	\end{proof}
	Then by $(\ref{3.4})$--$(\ref{3.8})$, this proves Theorem $\ref{t3.1}$.
\end{proof}

This argument can be generalized to prove
\begin{theorem}\label{t3.2}
	For any $n \in \mathbb{Z}$, $n \geq 1$, if
	\begin{equation}\label{3.32}
	\| \langle \partial_{x} \rangle^{2n + 2} u_{0} \|_{L^{4n + 1}(\mathbb{R})} \leq \epsilon(n),
	\end{equation}
	then $(\ref{1.1})$ has a global solution.
\end{theorem}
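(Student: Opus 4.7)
The plan is to mirror the $n=1$ argument of Theorem \ref{t3.1}, now with the richer ansatz $u = u_l + v$ where $u_l := \sum_{j=0}^{n-1} u^j$ is the truncated Picard tower from $(\ref{2.33})$. First I rescale so that the hypothesis $(\ref{3.32})$ makes the initial norm $\leq \epsilon$, then apply Theorem \ref{t2.1} to obtain a local solution on $[-1,1]$ with $v \in S^0$ and $\|v(t)\|_{L^2} \lesssim \epsilon^{2n+1}$. The one extra derivative in $(\ref{3.32})$ over $(\ref{2.1})$ upgrades $v(1)$ to $H^1$, which is what the energy method needs. A Strichartz continuation argument identical to $(\ref{3.4})$--$(\ref{3.8})$ shows it suffices to prove $\sup_{t \in [0,T]}(M(v(t))+E(v(t)))\lesssim_T 1$ on each compact interval.

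Telescoping the recursive definition $(\ref{2.33})$ yields the key identity
\begin{equation}
i \partial_t u_l + \partial_{xx} u_l = |\tilde u_l|^2 \tilde u_l, \qquad \tilde u_l := \sum_{j=0}^{n-2} u^j = u_l - u^{n-1},
\end{equation}
so that expanding $|u_l+v|^2(u_l+v) - |\tilde u_l|^2 \tilde u_l$ shows $v$ solves
\begin{equation}
i v_t + v_{xx} = |v|^2 v + 2|v|^2 u_l + v^2 \bar u_l + 2|u_l|^2 v + u_l^2 \bar v + S, \qquad S := |u_l|^2 u_l - |\tilde u_l|^2 \tilde u_l.
\end{equation}
The enhanced hypothesis $(\ref{3.32})$, combined with Lemma \ref{l2.1.1} and the induction $(\ref{2.33})$--$(\ref{2.36})$, provides the uniform bound $\|\langle \partial_x\rangle^{2(n-j)} u^j(t)\|_{L^{(4n+2)/(2j+1)}} \lesssim_T \epsilon^{2j+1}$ for $0 \leq j \leq n-1$. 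In particular, $\partial_x u_l \in L^\infty_{t,x}$ and $u_l, \tilde u_l, S$ lie in all relevant $L^p$ spaces, which is exactly what is required to run the Lemma \ref{l3.2} scheme.

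The core step is to compute $\frac{d}{dt}(M(v)+E(v))$ term by term as in $(\ref{3.12})$--$(\ref{3.25})$. Each contribution splits into two types: either a H\"older/Sobolev bound of the form $C_T (1 + M(v)+E(v))^{\gamma}$ with $\gamma \leq 1$, or a perfect-time-derivative piece pulled out via $(\partial_t v, F) = \frac{d}{dt}(v, F) - (v, \partial_t F)$. When $\partial_t$ falls on $u_l$, I substitute $\partial_t u_l = i \partial_{xx} u_l - i |\tilde u_l|^2 \tilde u_l$; the Laplacian piece is integrated by parts spatially (using the extra derivative from $(\ref{3.32})$), and the cubic piece is estimated pointwise by $(\ref{2.36})$. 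Absorbing the extracted time derivatives into a modified energy $\mathcal{E}(t) := M(v)+E(v) + f(t)$ where $|f(t)| \ll M(v)+E(v)+1$, one obtains $\frac{d}{dt}\mathcal{E} \lesssim_T \mathcal{E} + 1$, and Gronwall closes the estimate as in $(\ref{3.28})$--$(\ref{3.31})$.

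The principal obstacle relative to $n=1$ is that $\partial_t u_l$ no longer reduces to $i\partial_{xx} u_l$ alone, and the source $S$ generates new mixed $v$-$u_l$ terms. Both must be controlled by a priori estimates only, without any further integration by parts. The saving feature is that $u^{n-1}$ appears at least linearly in $S$ and each $u^j$ carries a factor of $\epsilon^{2j+1}$ with strong $L^p$ integrability from $(\ref{2.36})$; together with the extra derivative in $(\ref{3.32})$ over $(\ref{2.1})$, every spatial integration by parts is legal and every new cross term is absorbed with power $\gamma \leq 1$ in $M(v)+E(v)$. The bookkeeping grows with $n$ but the scheme is unchanged, and the blow-up criterion then upgrades the uniform bound on $M(v)+E(v)$ to global existence.
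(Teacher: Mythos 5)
Your proposal is correct and follows essentially the same route as the paper: the telescoped identity $i\partial_t u_l + \partial_{xx} u_l = |\tilde u_l|^2 \tilde u_l$ is exactly $(\ref{3.35.1})$--$(\ref{3.36})$, and your modified-energy/Gronwall scheme with the observations that only $(|v|^2 v, \partial_t u_l)$ costs a full power of $E(v)$ and that the source $S = |u_l|^2 u_l - F$ carries a factor of $u^{n-1}$ with strong integrability reproduces the paper's argument, including its decomposition $(\ref{3.38})$ and the bounds $(\ref{3.36})$--$(\ref{3.41})$. The only cosmetic caution is to perform the time differentiation of $S$ after factoring out $u^{n-1}$ (as in $(\ref{3.38})$), rather than on $|u_l|^2 u_l$ and $|\tilde u_l|^2\tilde u_l$ separately, since the individual cubic terms are not in $L^2$ for $n \geq 2$; your closing paragraph indicates you already intend this.
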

\begin{proof}
	
In this case, let
	\begin{equation}\label{3.33}
	u_{l}(t) = \sum_{j = 0}^{n - 1} u^{j}(t).
	\end{equation}
	Then $v$ solves the equation
	\begin{equation}\label{3.34.1}
	i \partial_{t} v + \partial_{xx} v = |u|^{2} u - F(t),
	\end{equation}
	and
	\begin{equation}\label{3.35.1}
	i \partial_{t} u_{l} + \partial_{xx} u_{l} = F(t),
	\end{equation}
	where
	\begin{equation}\label{3.36}
	F(t) = |\sum_{j = 0}^{n - 2} u^{j}|^{2} (\sum_{j = 0}^{n - 2} u^{j}).
	\end{equation}
	
	By $(\ref{2.34})$--$(\ref{2.36})$,
	\begin{equation}\label{3.34}
	\| u_{l} \|_{L^{\infty} \cap L^{4n + 2}} + \| \partial_{x} u_{l} \|_{L^{\infty} \cap L^{4n + 2}} \lesssim \epsilon,
	\end{equation}
	so using $(\ref{3.35.1})$, all the terms in the proof of Theorem $\ref{t3.1}$ that have two or three $v$ terms can be handled in exactly the same manner, after doing some algebra with the various $L^{p}$ norms. The crucial fact is that
	\begin{equation}\label{3.35}
	(|v|^{2} v, \partial_{t} u_{l}),
	\end{equation}
	is the only term which is bounded by some $C(T) E(v)$. All other terms are bounded by $C(T) M(v)^{\alpha} E(v)^{c - \alpha}$ for $\alpha > 0$ and $0 \leq c < 1$.\medskip
	
	Finally, using $(\ref{2.43})$,
	\begin{equation}\label{3.36}
	(v, |u_{l}|^{2} u_{l} - F(t)) \lesssim_{T} M(v)^{1/2}.
	\end{equation}
	To compute
	\begin{equation}\label{3.37}
	\aligned
	(v, \partial_{t}(|u_{l}|^{2} u_{l} - F(t))),
	\endaligned
	\end{equation}
	decompose
	\begin{equation}\label{3.38}
	|u_{l}|^{2} u_{l} - F(t) = u^{n - 1} \cdot \sum_{j_{1}, j_{2} = 0}^{n - 1} c(j_{1}, j_{2}) u^{j_{1}} u^{j_{2}}.
	\end{equation}
	By $(\ref{2.34})$--$(\ref{2.36})$,
	\begin{equation}\label{3.39}
	\| (\partial_{t} - i \partial_{xx}) u^{n - 1} \|_{L_{x}^{\frac{4n + 2}{2n - 1}}} \lesssim 1,
	\end{equation}
	while integrating by parts,
	\begin{equation}\label{3.40}
	\aligned
	(v, i \partial_{xx} u^{n - 1} \cdot \sum_{j_{1}, j_{2} = 0}^{n - 1} c(j_{1}, j_{2}) u^{j_{1}} u^{j_{2}}) \lesssim \| \partial_{x} v \|_{L^{2}} \| \partial_{x} u^{n - 1} \|_{L^{\frac{4n + 2}{2n - 1}}} \sum_{j = 0}^{n - 1} \| u^{j} \|_{L^{4n + 2}}^{2} \\
	+ \| v \|_{L^{2}} \| \partial_{x} u^{n - 1} \|_{L^{\frac{4n + 2}{2n - 1}}} \sum_{j} \| u^{j} \|_{L^{4n + 2}} \cdot \sum_{j} \| \partial_{x} u^{j} \|_{L^{4n + 2}} \lesssim_{T} M(v)^{1/2} + E(v)^{1/2}.
	\endaligned
	\end{equation}
	Therefore,
	\begin{equation}\label{3.41}
	(v, (\partial_{t} u^{n - 1}) \cdot \sum_{j_{1}, j_{2} = 0}^{n - 1} c(j_{1}, j_{2}) u^{j_{1}} u^{j_{2}}) \lesssim_{T} M(v)^{1/2} + E(v)^{1/2}.
	\end{equation}
	The contribution of $\partial_{t}  \sum_{j_{1}, j_{2} = 0}^{n - 1} c(j_{1}, j_{2}) u^{j_{1}} u^{j_{2}}$ to $(\ref{3.37})$ is similar. This completes the proof of Theorem $\ref{t3.2}$.
\end{proof}

\section*{Acknowledgments}
The authors  thank J. Bourgain, P. Deift, J. Lebowitz and W. Schlag for helpful discussions. The first author gratefully acknowledges the support of NSF grants DMS-1500424 and DMS-1764358 while writing this paper. He also gratefully acknowledges the support by the von Neumann fellowship at the Institute for Advanced Study while writing this paper. The second author is supported in part by NSF grant DMS-160074.

\bibliography{biblio}
\bibliographystyle{alpha}
\medskip



\end{document}